\newcounter{braid}
\newcounter{strands}
\DeclareMathAlphabet{\bsf}{OT1}{cmss}{bx}{n}
\def\cross{%
  \@ifnextchar^{\message{Got sup}\cross@sup}{\cross@sub}}
\def\cross@sup^#1_#2{\render@cross{#2}{#1}}
\def\cross@sub_#1{\@ifnextchar^{\cross@@sub{#1}}{\render@cross{#1}{1}}}
\def\cross@@sub#1^#2{\render@cross{#1}{#2}}
\def\render@cross#1#2{
  \def\strand{#1}
  \def\crossing{#2}
  \pgfmathsetmacro{\cross@y}{-\value{braid}*\braid@h}
  \pgfmathtruncatemacro{\nextstrand}{#1+1}
  \foreach \thread in {1,...,\value{strands}}
  {
    \pgfmathsetmacro{\strand@x}{\thread * \braid@w}
    \ifnum\thread=\strand
    \pgfmathsetmacro{\over@x}{\strand * \braid@w + .5*(1 - \crossing) * \braid@w}
    \pgfmathsetmacro{\under@x}{\strand * \braid@w + .5*(1 + \crossing) * \braid@w}
    \draw[braid] \pgfkeysvalueof{/tikz/braid start} +(\under@x pt,\cross@y pt) to[out=-90,in=90] +(\over@x pt,\cross@y pt -\braid@h);
    \draw[braid] \pgfkeysvalueof{/tikz/braid start} +(\over@x pt,\cross@y pt) to[out=-90,in=90] +(\under@x pt,\cross@y pt -\braid@h);
    \else
    \ifnum\thread=\nextstrand
    \else
     \draw[braid] \pgfkeysvalueof{/tikz/braid start} ++(\strand@x pt,\cross@y pt) -- ++(0,-\braid@h);
    \fi
   \fi
  }
  \stepcounter{braid}
}
\tikzset{braid/.style={double=\pgfkeysvalueof{/tikz/braid colour},double distance=1pt,line width=2pt,white}}
\newcommand{\braid}[2][]{%
  \begingroup
  \pgfkeys{/tikz/strands=2}
  \tikzset{#1}
  \pgfkeysgetvalue{/tikz/braid width}{\braid@w}
  \pgfkeysgetvalue{/tikz/braid height}{\braid@h}
  \setcounter{braid}{0}
  \let\sigma=\cross
  #2
  \endgroup
}
\newtheorem{theorem}{Theorem}
\newtheorem{proposition}[theorem]{Proposition}
\def\Z{\mathbb{Z}}
\def\C{\mathbb{C}}
\def\R{\mathbb{R}}
\def\C{\mathbb{C}}
\def\N{\mathbb{N}}
\def\T{\mathbb{T}}
\def\qed{\hfill$\square$\medskip}
\def\Zpk{\mathbb{Z}/p^{k}}
\def\Zpk1{\mathbb{Z}/p^{k-1}}
\newcommand{\rref}[1]{(\ref{#1})}
\newcommand{\beg}[2]{\begin{equation}\label{#1}#2\end{equation}}
\def\r{\rightarrow}
\def\sl2{\widetilde{SL_{2}(\Z)}}
\author{Po Hu, Igor Kriz, Petr Somberg, and Foling Zou}
\title[$BP\R$]{The $\Z/p$-equivariant spectrum $BP\R$ for an odd prime $p$}
\thanks{Hu acknowledges support by NSF grant 2301520. Kriz acknowledges the support of Simons Foundation award 958219. Somberg acknowledges support
from the grant GACR 19-28628X}
\begin{document}

\maketitle

\begin{center} \small {\sc With contributions by Guchuan Li}

\end{center}

\vspace{10mm}

\begin{abstract}
In the present paper, we construct a $\mathbb{Z}/p$-equivariant analog of the $\mathbb{Z}/2$-equivariant
spectrum $BP\mathbb{R}$ previously constructed by Hu and Kriz. We prove that this spectrum has some of
the properties conjectured by Hill, Hopkins, and Ravenel. Our main construction method is an $\mathbb{Z}/p$-equivariant 
analog of the Brown-Peterson tower of $BP$, based on a previous description of the $\Z/p$-equivariant Steenrod
algebra with constant coefficients by the authors. We also describe several variants of our construction and
comparisons with other known equivariant spectra.
\end{abstract}

\vspace{5mm}

\section{Introduction}

Hill, Hopkins, and Ravenel \cite{hhr} conjectured that there exists a $\Z/p$-equivariant structure on
$BP^{\wedge (p-1)}$ for $p>2$ such that the geometric fixed point spectrum is $H\Z/p$. This equivariant
spectrum is expected to have a number of properties. The purpose
of this paper is to give a construction of such a spectrum and prove some of the
properties. Our construction is based on a mild extension of the computation of the $\Z/p$-equivariant Steenrod algebra
with respect to the constant Mackey functor $\underline{\Z/p}$ \cite{st1,sankarw}. Following Sankar-Wilson 
\cite{sankarw}, we denote by $T$ the second desuspension of the $\Z/p$-equivariant degree $p$
map
$$S^\beta\r S^2$$
where $\beta$ is the basic irreducible representation of $\Z/p$. Denoting by $\widetilde{\mathcal{L}}_{p-1}$
the $\Z/p$-Mackey functor whose non-equivariant part is the reduced regular representation 
$\mathcal{L}_{p-1}$ of
$\Z/p$ and the $\Z/p$-fixed part is $0$, there is a map of $\Z/p$-equivariant spectra
\beg{esteenrodxin}{
H\underline{\Z}\r \Sigma^{2p^{n-1}+(p^n-p^{n-1}-1)\beta} H\underline{\Z}\wedge T
}
(which comes from the fact that after smashing with $H\underline{\Z}$, the right-hand side of \rref{esteenrodxin}
becomes a summand of the left-hand side as an $H\underline{\Z}$-module.
Now we also have a ``connecting map"
$$H\underline{\Z}\wedge T\rightarrow \Sigma^{\beta-1}
H\widetilde{\mathcal{L}}_{p-1}$$
(see the cofibration sequence \rref{eresol1} below). Composing with \rref{esteenrodxin}, we get a map
\beg{ekinv}{Q_n^\prime:H\underline{\Z}\r \Sigma^{2p^{n-1}-1+
(p^n-p^{n-1})\beta}H\widetilde{\mathcal{L}}_{p-1}}
(see formula \rref{eresol2} below).
This map is a $\Z/p$-equivariant analog of the integral $Q_n$-elements which form the first k-invariant
of the Brown-Peterson construction of $BP$ \cite{bp}. 

The approach of our construction is to construct a $\Z/p$-equivariant spectrum $BP\R$ by mimicking, in a minimal
way, the Brown-Peterson construction \cite{bp} in the category of $\Z/p$-equivariant spectra. It is important to note that the appearance of the non-constant Mackey 
functor occurring on the right-hand side of \rref{ekinv} is a reflection of the fact that in $(BP\R_{\{e\}})_*$,
$v_n$ lies in a copy of the representation $\mathcal{L}_{p-1}$. We denote the generator of this representation
by $r_n$. In fact, in the equivariant analogue of the $BP$ tower, a key feature is the interplay between
the Mackey functors $\underline{\Z}$, $\widetilde{\mathcal{L}}_{p-1}$ and the
Mackey functor $\underline{\mathcal{L}}_p$, which is the principal projective 
Mackey functor on a fixed element (i.e. has the integral regular representation as the non-equivariant part, and
$\Z$ as the fixed points).

The non-equivariant
spectrum underlying our ``minimal" $BP\R$ following the Brown-Peterson 
construction is somewhat smaller than the $\bigwedge_{p-1}BP$ conjectured by
Hill, Hopkins, and Ravenel \cite{hhr}. In fact, at each $v_n$, we are forced to put into the non-equivariant
coefficients the representations
\beg{enoneqs}{(\Z\oplus \mathcal{L}_{p-1}r_n\oplus r_n^2\mathcal{L}_{p}[r_n])\otimes \Z[Nr_n]}
where $\mathcal{L}_p$ is the integral regular $\Z/p$-representation and $N$ denotes the multiplicative norm
(see formula \rref{ebpre} below). 

The non-equivariant spectrum conjectured by Hill, Hopkins, and Ravenel \cite{hhr} (which we denote
by $BP\R^{HHR}$) should have, instead of \rref{enoneqs},
\beg{econcc}{Sym(\mathcal{L}_{p-1}\cdot r_n).}
This, in fact, coincides with \rref{enoneqs} for $p=3$. For $p\geq 5$, \rref{econcc} is bigger, but one can
construct a candidate for the spectrum $BP\R^{HHR}$ by forming a wedge of $BP\R$ with a wedge
of even suspensions of copies of the multiplicative norm of $BP$ from $\{e\}$ to $\Z/p$.
(See Section \ref{sjw} below.)

We should remark that we can currently only make the equivariant analogue of the Brown-Peterson 
construction in the category of Borel-complete spectra. In this category, the obstructions to continuing the
construction are either non-equivariant (and vanish for the same reason as in \cite{bp}, i.e. evenness), or
are non-torsion with respect to multiplication by the class represented by the inclusion
$$S^0\r S^\beta,$$
which can therefore be treated on the level of geometric fixed points. On geometric fixed points, however,
the equivariant Brown-Peterson tower splits into a wedge equivalences on summands, which is why
those obstructions also vanish.

The fact that we work in the Borel-complete category, however, should not be a substantial restriction,
since the spectrum $BP\R$ should be Borel-complete anyway (similarly as for $p=2$). In fact, we
have a $\Z/p$-equivariant analog of the Borel cohomology spectral sequence \cite{hk}, which 
we present in Section \ref{sborel}. This allows us to calculate the 
$RO(\Z/p)$-graded coefficients $BP\R_\star$ completely (Theorem \ref{t1} below).
Not having a ``genuine" version of equivariant Brown-Peterson
construction amounts to not having, at the moment, an analogue of the slice spectral sequence.

In Section \ref{sjw}, we also discuss analogues of the $BP\R$ construction on Johnson-Wilson-type spectra.
Using the obstruction theory of Robinson \cite{robinson} and Baker \cite{baker}, 
we construct a $\Z/p$-action related to $BP\R$ on a completion of the smash product of $(p-1)$ copies of $E(n)$
(or its variants, such as $E_n$). We conjecture that there is an ``orientation" map from $BP\R^{HHR}$
to these spectra (however, we do not yet have a ring structure on the source). 

It is also known that $\Z/p$ acts on $E_n$ when $(p-1)\mid n$ via the subgroup of
the Morava stabilizer group. The case
$n=p-1$ has been especially studied
(see \cite{nave,symonds}). It is reasonable to conjecture that $BP\R^{HHR}$ maps into this equivariant
$E_n$ in a way which is identity on the $\mathcal{L}_{p-1}$ containing $v_n$ in
the non-equivariant coefficients, for which we give evidence on the level of formal group laws.

The present paper incorporates numerous observations, suggestions, and references communicated to us by Guchuan Li.

\vspace{5mm}

\section{Preliminary computations}

We will use the 
integral trivial, regular resp.
reduced regular representation $\mathcal{L}_1$, $\mathcal{L}_p$, $\mathcal{L}_{p-1}$
of $\Z/p$. Note that all are isomorphic to their duals integrally.
In case of $\mathcal{L}_{p-1}$, we have an isomorphism
$$\Z^p/(1,1,\dots,1)\r \{(a_1,\dots,a_p)\in \Z^p\mid \sum a_i=0\}$$
by sending
$$(1,0,\dots,0)\mapsto (1,-1,0,\dots,0).$$
We will work with integral Mackey functors here. We denote by $\underline{\Z}=\underline{\mathcal{L}}_1$ the constant 
Mackey functor, and by $\widetilde{\mathcal{L}}_{p-1}$ the Mackey functor equal to the integral reduced regular representation 
on the free orbit and $0$ on the fixed orbit. We also have the co-constant Mackey functor $\overline{\mathcal{L}}_1$ where again both the non-equivariant and fixed parts are $\Z$ and
the corestriction is $1$ while the restriction is $p$. We have short exact sequences
\beg{ess1}{0\r \underline{\mathcal{L}}_1\r \underline{\mathcal{L}}_p\r \widetilde{\mathcal{L}}_{p-1}\r 0}
\beg{ess2}{0\r \widetilde{\mathcal{L}}_{p-1}\r \underline{\mathcal{L}}_p\r\overline{\mathcal{L}}_1\r 0.}
We also note that for a non-trivial irreducible representation $\beta$ of $\Z/p$ ($p>2$), we have
\beg{ess3}{H\overline{\mathcal{L}}_1=\Sigma^{2-\beta}H\underline{\mathcal{L}}_1.}
We also have 
\beg{esh1}{H\widetilde{\mathcal{L}}_{p-1}\wedge_{H\underline{\Z}}H\widetilde{\mathcal{L}}_{p-1}=
H\overline{\mathcal{L}}_1\vee\bigvee_{p-2}H\underline{\mathcal{L}}_p.
}
We will work $p$-locally, so whenever we say ``$\Z$," we mean $\Z_{(p)}$. Then {\em all} equivariant spectra
are $(\beta-\beta^{\prime})$-periodic for non-trivial irreducible $\Z/p$-representations $\beta,\beta^\prime$. This
is due to the fact that there exists a $\Z/p$-equivariant map of spaces
$$S^\beta\r S^{\beta^\prime}$$
of some degree $k\in \Z/p^\times$, which is a $p$-local equivalence (cf. \cite{sankarw}). 
Thus, instead of $RO(\Z/p)$-grading,
we can consider $R$-grading where $R=\Z\{1,\beta\}$ for a chosen non-trivial irreducible $\Z/p$-representation 
$\beta$. This is also true for ordinary $\Z/p$-equivariant homology with coefficients over a $\underline{\Z}$-Mackey 
module for a different reason (see \cite{st1}).

\begin{proposition}\label{p1}
1. The $R$-graded coefficients of $H\underline{\mathcal{L}}_1$ are $\Z$ in degrees $2k-k\beta$, $k\geq 0$ 
and $\Z/p$ in degrees $2k-\ell\beta$ with $0\leq k<\ell$ (this is called the {\em good wedge})
and $\Z$ in degrees $2k-k\beta$, $k<0$ and $\Z/p$ in degrees $-1-2k+\ell\beta$ where $2<2k+1<2\ell$
(this is called the {\em derived wedge}).

2. The $R$-graded coefficients of $H\widetilde{\mathcal{L}}_{p-1}$ are $\Z/p$ in degrees $2k+1-\ell\beta$ 
where $0<2k+1<2\ell$ (this is the {\em good wedge}) and $\Z/p$ in degrees $-2k+\ell\beta$ where
$0<k\leq\ell$ (this is the {\em derived wedge}).

(In the remaining $R$-dergrees, the coefficients are $0$.)

\end{proposition}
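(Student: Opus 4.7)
The plan is to compute the three Eilenberg-MacLane coefficient rings $\pi_\star^{\Z/p}H\underline{\mathcal{L}}_1$, $\pi_\star^{\Z/p}H\underline{\mathcal{L}}_p$, and $\pi_\star^{\Z/p}H\widetilde{\mathcal{L}}_{p-1}$ in parallel, exploiting the short exact sequences \rref{ess1}, \rref{ess2} of Mackey functors together with the suspension identification \rref{ess3}. The convenient starting point is $H\underline{\mathcal{L}}_p$: since $\underline{\mathcal{L}}_p$ is the principal projective, induced up from the trivial subgroup, $H\underline{\mathcal{L}}_p \simeq (\Z/p)_+ \wedge H\Z$, so its $R$-graded coefficients coincide with the underlying non-equivariant coefficients of $H\Z$. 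Working $p$-locally, where $S^\beta \simeq S^2$, the underlying integer degree of $\star = 2k - \ell\beta$ is $2k - 2\ell$; hence $\pi_\star^{\Z/p}H\underline{\mathcal{L}}_p = \Z$ precisely on the diagonal $\star = 2k - k\beta$ ($k \in \Z$) and is zero elsewhere.

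I would then feed this input into the two long exact sequences in $R$-graded homotopy coming from \rref{ess1} and from \rref{ess2} combined with \rref{ess3}, namely
\begin{equation*}
\cdots \to \pi_\star H\underline{\Z} \to \pi_\star H\underline{\mathcal{L}}_p \to \pi_\star H\widetilde{\mathcal{L}}_{p-1} \to \pi_{\star-1}H\underline{\Z} \to \cdots
\end{equation*}
and
\begin{equation*}
\cdots \to \pi_\star H\widetilde{\mathcal{L}}_{p-1} \to \pi_\star H\underline{\mathcal{L}}_p \to \pi_{\star+\beta-2}H\underline{\Z} \to \pi_{\star-1}H\widetilde{\mathcal{L}}_{p-1} \to \cdots.
\end{equation*}
The positive cone of $\pi_\star H\underline{\Z}$ should be generated by two classes: the orientation $u_\beta \in \pi_{2-\beta}H\underline{\Z}$, realized as a lift of the generator of $\pi_{2-\beta}H\underline{\mathcal{L}}_p = \Z$ (this lift exists once one checks inductively that $\pi_{1-\beta}H\widetilde{\mathcal{L}}_{p-1} = 0$), and the Euler class $a_\beta \in \pi_{-\beta}H\underline{\Z}$ (the Hurewicz image of $S^0 \r S^\beta$), which has order exactly $p$. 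The nonnegative-diagonal $\Z$'s are then represented by $u_\beta^k$, and the good wedge $\Z/p$'s in degree $2k-\ell\beta$, $0 \le k < \ell$, by $u_\beta^k a_\beta^{\ell-k}$; Part 2 of the proposition in the corresponding range drops out of the first long exact sequence, since the flanking $\pi_\star H\underline{\mathcal{L}}_p$ terms vanish in those degrees.

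The negative-diagonal $\Z$'s and the derived-wedge $\Z/p$'s of $\pi_\star H\underline{\Z}$ are best obtained by smashing the isotropy separation cofibration $(E\Z/p)_+ \r S^0 \r \widetilde{E\Z/p}$ with $H\underline{\Z}$: inverting $a_\beta$ realizes the geometric fixed point spectrum, and the Tate-style connecting map is responsible for the odd-degree shift by $-1$ in the derived-wedge formula $-1-2k+\ell\beta$. The corresponding part of $\pi_\star H\widetilde{\mathcal{L}}_{p-1}$ then falls out of either long exact sequence (at $\star=-2k+k\beta$ with $k\ge 1$, one must check that the edge map $\Z \to \Z$ is multiplication by $p$, giving the expected $\Z/p$ cokernel). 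The principal technical obstacle is extension bookkeeping: one must verify that all claimed wedge classes are $\Z/p$ (not $\Z$-extensions of $\Z/p$) and that no classes appear in the excluded $R$-degrees. This is handled by comparing with the underlying non-equivariant coefficients (which are concentrated in integer degree zero for $H\Z$ and vanish off degree zero for $H\widetilde{\mathcal{L}}_{p-1}$), and by using that $a_\beta$-multiplication has order exactly $p$, which forces all wedge classes to be $p$-torsion without extensions.
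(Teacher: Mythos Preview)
The paper does not actually prove this proposition: it is stated and immediately followed by a \qed, treating the computation as known.  Your proposed argument is correct and is the standard route to these coefficients: identify $H\underline{\mathcal{L}}_p$ as the induced spectrum $(\Z/p)_+\wedge H\Z$ so that its $R$-graded homotopy is concentrated on the diagonal, then use the long exact sequences from \rref{ess1}, \rref{ess2} together with \rref{ess3} and the isotropy separation sequence to bootstrap both $H\underline{\Z}$ and $H\widetilde{\mathcal{L}}_{p-1}$.  The only places where your sketch would need to be filled in carefully are exactly the ones you flag: the identification of the boundary $\Z\to\Z$ as multiplication by $p$ on the negative diagonal (which amounts to the short exact sequence \rref{ess2} having the corestriction map $\widetilde{\mathcal{L}}_{p-1}\to\mathcal{L}_p$ become the norm on fixed points), and the verification that no hidden extensions occur off the diagonal, which follows since the underlying homotopy vanishes there and the restriction map on the diagonal is already surjective from the induced spectrum.
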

\qed

\vspace{3mm}
The pattern of the $R$-graded coefficients of $H\underline{\mathcal{L}}_1=H\underline{\Z}$ is shown in Figure 1 below (where a solid dot 
means a copy of $\Z/p$ and an empty square means a copy of $\Z$.

\begin{figure}
\includegraphics{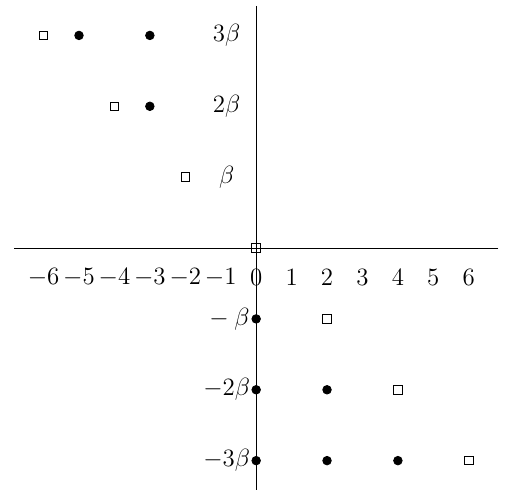}
\caption{The coefficients $H\underline{\Z}_\star$}
\end{figure}

In our tower illustration, we shall use the shortcut for this pattern shown in Figure 2.

\begin{figure}
\includegraphics{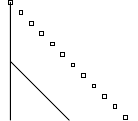}
\caption{Shorthand notation for $H\underline{\Z}_\star$}
\end{figure}

Figure 3 shows the pattern of the $R$-graded coefficients of $\Sigma^{\beta-1}H\widetilde{\mathcal{L}}_{p-1}$. 
The shift is so that the corner of the ``good wedge" is at the $(0,0)$-point.

\begin{figure}
\includegraphics{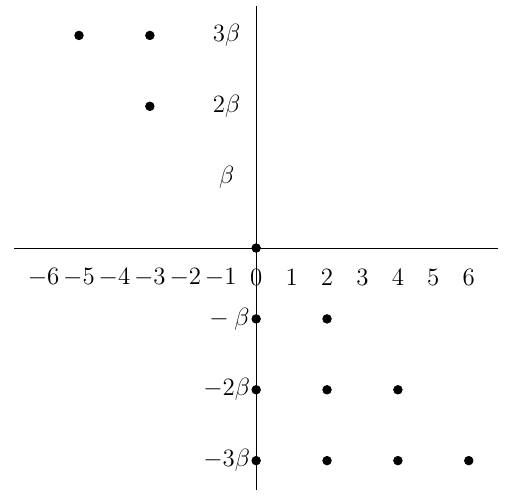}
\caption{The coefficients $(\Sigma^{\beta-1}H\widetilde{\mathcal{L}}_{p-1})_\star$}
\end{figure}

Figure 4 shows the shortcut we use for this pattern.

\begin{figure}
\includegraphics{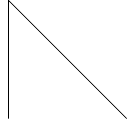}
\caption{Shorthand notation for $(\Sigma^{\beta-1}H\widetilde{\mathcal{L}}_{p-1})_\star$}
\end{figure}

Figure 5 shows the $R$-graded coefficients of $H\underline{\Z/p}$.

\begin{figure}
\includegraphics{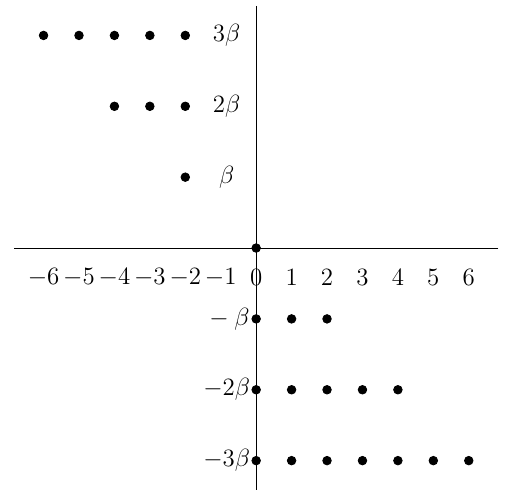}
\caption{The coefficients $H\underline{\Z/p}_\star$}
\end{figure}

Our shortcut for this pattern is shown in Figure 6.

\begin{figure}
\includegraphics{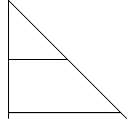}
\caption{Shorthand notation for $H\underline{\Z/p}_\star$}
\end{figure}

Figure 7 shows the $R$-graded coefficients of 
$\Sigma^{\beta-1}H\widetilde{L}_{p-1}=\Sigma^{\beta-1}H(\widetilde{L}_{p-1}/p)$.

\begin{figure}
\includegraphics{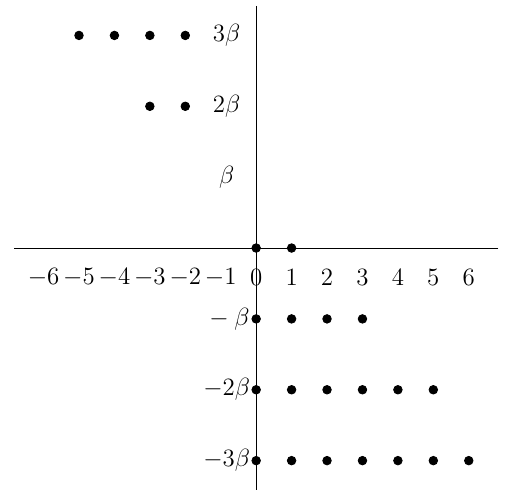}
\caption{The coefficients $(\Sigma^{\beta-1}H(\widetilde{\mathcal{L}}_{p-1}/p))_\star$}
\end{figure}

Our shorthand for this pattern is shown in Figure 8.

\begin{figure}
\includegraphics{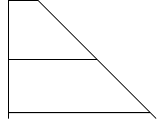}
\caption{Shorthand notation for $(\Sigma^{\beta-1}H(\widetilde{\mathcal{L}}_{p-1}/p))_\star$}
\end{figure}

\vspace{3mm}

\section{The Borel cohomology spectral sequence}\label{sborel}

The Borel cohomology $F(E\Z/p_+,X)$ for a $\Z/p$-equivariant spectrum $X$ will be denoted
by $X^c$. Coefficients of $H\underline{\mathcal{L}}_1^c$, $H\widetilde{\mathcal{L}}_{p-1}^c$ are obtained by 
inverting $\sigma^{-2}$ in the good wedge in Proposition \ref{p1} (which acts by isomorphism
wherever dimensionally possible). 

Homotopy classes in a summand of the non-equivariant homotopy groups of a $\Z/p$-equivariant
spectrum which is isomorphic to $\mathcal{L}_p$ is called {\em negligible}. Such classes cannot receive or support
differentials in the Borel cohomology spectral sequence because of the theory of Mackey functors.

We will completely calculate the Borel cohomology spectral sequence of $BP\R$ at an odd prime $p$
modulo negligible $BP$-summands in $BP\R_{\{e\}}$. Modulo negligible $BP$-summands, $(BP\R_{\{e\}})_*$
can be written as
\beg{ebpre}{
B=\bigotimes_{n>0} (\mathcal{L}_1\oplus \mathcal{L}_{p-1}\cdot r_n\oplus r_n^2\mathcal{L}_p[r_n])[\Phi(v_n)]
}
where $\Phi(v_n)$ has the same dimension as $v_n^p$ and
$r_n$ is a generator of a $\mathcal{L}_{p-1}$-subrepresentation which contains $v_n$. 

\vspace{3mm}

Recall that in \cite{hk}, Hu and Kriz started with the $E^1$-term of the Borel cohomology spectral sequence and included $d^1$
in the differential pattern. This has the advantage of allowing to treat $v_0$ on the same level as the other $v_n$'s, which makes the
pattern more natural.

To describe the appropriate analog for $p$ odd, we preview the slightly more complicated assortment of elements and
differentials \rref{ediff1}, \rref{evn}, \rref{ephivn} we will encounter. From the point of view of this pattern, the most natural
way to start is to put 
\beg{ephivp}{p=\Phi(v_0),}
and include the second, but not the first, differential of \rref{ediff1} for $n=0$. Thus, the elements $v_0$, $\sigma^{\frac{2}{p}-2}$ of
fractional degrees
$$|v_0|=\frac{2}{p}-1-\frac{\beta}{p},\; |\sigma^{\frac{2}{p}-2}|=\beta(\frac{1}{p}-1)-\frac{2}{p}+2$$
are not present in our spectral sequence, but the exterior generator 
$$v_0\sigma^{\frac{2}{p}-2}$$
of degree
$$|v_0\sigma^{\frac{2}{p}-2}|=1-\beta$$
is included, making the $E^1$-term
$$B[\sigma^2,\sigma^{-2}][b]\otimes\Lambda(v_0\sigma^{\frac{2}{p}-2}).$$
The $d^1$ differential has 
$$d^1(v_0\sigma^{\frac{2}{p}-2})=pb,$$
but also takes group cohomology on \rref{ebpre}. This gives $E^2$-term
\beg{ebpre1}{E^2 = B'[\sigma^2,\sigma^{-2}][b]/pb}
where if we put
\beg{evi}{v_I=\prod v_{n}^{i_n}, \; I=(i_1,i_2,\dots),\; i_n\in \{0,1\},}
(note that $v_n^2$ is negligible), and
\beg{evi1}{|I|=\sum_n i_n,
}
then 
$$B' = \bigl( \mathbb{Z}\{v_I: I \text{ even}\} \oplus \mathbb{Z}/p\{v_I: I \text{ odd}\}\bigr)[\Phi(v_n)].$$

The geometric fixed points of $BP\R$ are $H\Z/p$, so we need a resolution with respect to the dual
of the polynomial algebra $\Z[\sigma^{-2}]$, which is a divided power polynomial algebra. This implies two sets
of differentials
\beg{ediff1}{\sigma^{-2p^{n-1}}\mapsto v_nb^?,\; v_n\sigma^{-2(p-1)p^{n-1}}\mapsto \Phi(v_n)b^?.}
To figure out the exact power of $b$ in \rref{ediff1}, we need to need to figure out the exact equivariant degree
of $v_n$. The $\Z$-graded component must be $2p^{n-1}-1$ in order for the differential \rref{ediff1} to decrease
degree by $1$. Given the fact that $v_n$ sits in a $\widetilde{\mathcal{L}}_{p-1}$, and given Proposition \ref{p1},
its equivariant degree should be of the form $k+\ell\beta$ where $k+2\ell=|v_n|-1=2p^n-3$. Thus, we conclude
that
\beg{evn}{
|v_n|=2p^{n-1}-1+(p^n-p^{n-1}-1)\beta.
}
Similarly, given the fact that $\Phi(v_n)$ sits in a copy of $\underline{\mathcal{L}}_1$, we have
\beg{ephivn}{
|\Phi(v_n)|=2p^n-2+(p^n-1)(p-1)\beta.
}
From this, we conclude that the differential pattern is
\beg{ediffeven}{\sigma^{-2p^{n-1}}\mapsto v_nb^{p^n-1},\; v_n\sigma^{-2(p-1)p^{n-1}}\mapsto \Phi(v_n)b^{
(p^n-1)(p-1)+1}.}
(The differentials on $\sigma^{-2kp^{n-1}}$ for $2\leq k\leq p-1$ are determined by the Leibniz rule.)
This is called the {\em even differential pattern}. 

\vspace{3mm}
\noindent

\vspace{3mm}

The reason this does not tell the whole story is that we have \rref{esh1}, so 
$v_I$ sits in a $\sigma^{2\ell}$-shifted copy of $\widetilde{\mathcal{L}}_{p-1}$ resp. $\underline{\mathcal{L}}_1$
depending on whether $|I|$ is even resp. odd. In fact, working out the shifts precisely, one gets that
\beg{evii}{
|v_I|=\sum_{i_n=1}(2p^{n-1}-1)+
\left(
\sum_{i_n=1}(p^n-p^{n-1})-\lceil\frac{|I|}{2}\rceil
\right)\beta.
}
When multiplying \rref{ediffeven} by $v_I$ with $i_n=0$ and $|I|$ odd, the $\sigma^{-2}$-power will be
in a copy of $\widetilde{\mathcal{L}}_{p-1}$, (so non-equivariantly, its dimension goes down by $1$), 
while the $v_n$ will be in a copy of $\underline{\mathcal{L}}_1$, so non-equivariantly, its dimension goes up by $1$.
Since the non-equivariant dimension of $b$ is $-2$, we then obtain the {\em odd differential pattern}
\beg{ediffodd}{\sigma^{-2p^{n-1}}\mapsto v_nb^{p^n},\; v_n\sigma^{-2(p-1)p^{n-1}}\mapsto \Phi(v_n)b^{
(p^n-1)(p-1)}.}
Of course, in the spectral sequence, the differential of every monomial has multiple summands, and we must
select the summand which generates lowest $b$-torsion. Since, luckily,
$$p^n<(p^n-1)(p-1)$$
and
$$(p^n-1)(p-1)+1<p^{n+1}-1,$$
the lowest $b$-torsion is always generated by $v_n$ or $\Phi(v_n)$ with the lowest $n$. For this reason, the
Tate spectral sequence (obtained by inverting $b$) converges to $\Z/p[b,b^{-1}]$ and similarly as in the
case of $p=2$, writing monomials
$$v_I\Phi(v_J)$$
where $I$ is as above and $J=(j_1,j_2,\dots)$, $j_n\in \N_0$,
$$\Phi(v_J)=\prod_n \Phi(v_n)^{j_n},$$
letting $r=r(I)$ resp. $s=s(J)$ be the lowest $n$ for which $i_n\neq 0$ resp. $j_n\neq 0$,
(set to $\infty$ when not applicable),
we obtain the following

\begin{theorem}\label{t1}
Modulo negligible $BP_*[\sigma^2,\sigma^{-2}]$-summands, the $R$-graded coefficients of $BP\R$ are
a sum of 
$$\Z_{(p)}[b],$$
``even-pattern summands" where $|I|$ is odd and $r\leq s$:
$$\bigoplus_{\ell\nequiv-1\mod p}\Z/p\cdot v_I\Phi(v_J)\cdot\sigma^{2p^{r-1}\ell}[b]/(b^{p^r-1})$$
or $|I|$ is even, $I\neq 0$ and $r>s$:
$$\bigoplus_{\ell\in \Z} \Z_{(p)}\cdot v_I\Phi(v_J)\cdot \sigma^{2p^{s}\ell}[b]/(b^{(p^s-1)(p-1)+1},pb)$$
and ``odd-pattern summands" where $|I|$ is even and $I\neq 0$ and $r\leq s$:
$$\bigoplus_{\ell\nequiv-1 \mod p}\Z_{(p)}\cdot v_I\Phi(v_J)\cdot
\Sigma^{2p^{r-1}\ell}[b]/(b^{p^r},pb)$$
or $|I|$ is odd and $r>s$ :
$$\bigoplus_{\ell\in\Z} \Z/p\cdot v_I\Phi(v_J)\cdot \sigma^{2p^s\ell}[b]/(b^{(p^s-1)(p-1)}).$$

\end{theorem}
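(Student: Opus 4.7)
The plan is to read off the $E^\infty$-term of the Borel cohomology spectral sequence by applying the even and odd differential patterns \rref{ediffeven} and \rref{ediffodd} to every monomial of the $E^2$-term $B'[\sigma^2,\sigma^{-2}][b]/pb$, and then assembling the surviving classes by their $b$-torsion.

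I would first stratify $E^2$ by monomials $v_I \Phi(v_J) \sigma^{2k}$ and decide which differential pattern governs each one. By \rref{esh1} and \rref{evii}, the parity of $|I|$ controls which Mackey functor (modulo negligibles) underlies the summand containing $v_I$: it is $\widetilde{\mathcal{L}}_{p-1}$ when $|I|$ is odd and $\underline{\mathcal{L}}_1$ when $|I|$ is even, using \rref{ess3} to absorb the resulting $\Sigma^{2-\beta}$-shift from $\overline{\mathcal{L}}_1$. The $\pm 1$ discrepancy in the $b$-exponents between \rref{ediffeven} and \rref{ediffodd} arises exactly from this parity switch, since the non-equivariant dimensions of corresponding classes in the two Mackey functors differ by one. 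For each fixed monomial I would then identify the leading differential acting on it: writing $k = p^{r'-1}\ell$ with $\ell$ coprime to $p$, the differential $\sigma^{-2p^{r'-1}} \mapsto v_{r'} b^?$ wins when $i_{r'}=0$, while if $i_{r'}=1$, negligibility of $v_{r'}^2$ forces the paired differential $v_{r'}\sigma^{-2(p-1)p^{r'-1}} \mapsto \Phi(v_{r'}) b^?$. The inequalities $p^n < (p^n-1)(p-1)$ and $(p^n-1)(p-1)+1 < p^{n+1}-1$ then guarantee that this lowest-$n$ differential produces strictly smaller $b$-torsion than any higher-index differential, so it alone determines the $b$-quotient.

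Setting $r = r(I)$ and $s = s(J)$, the four combinations of $r \leq s$ versus $r > s$ with the two parities of $|I|$ yield exactly the four cases of the theorem, with $b$-quotients $(b^{p^r-1})$, $(b^{(p^s-1)(p-1)+1}, pb)$, $(b^{p^r}, pb)$, and $(b^{(p^s-1)(p-1)})$ respectively. The constraint $\ell \not\equiv -1 \mod p$ excludes classes sitting in the good-wedge/derived-wedge gap from Proposition \ref{p1}, which contribute no homotopy, and the leading summand $\Z_{(p)}[b]$ collects the $\ell \geq 0$, $I = J = 0$ classes. Convergence is ensured by the Tate spectral sequence (obtained by inverting $b$) converging to $\Z/p[b,b^{-1}]$, so every class outside $\Z_{(p)}[b]$ is $b$-torsion of finite order. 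The main obstacle will be the parity bookkeeping: confirming via \rref{esh1}, \rref{ess3}, and \rref{evii} that an odd-parity $v$-factor reliably flips the $b$-exponent of the first differential by $\pm 1$ exactly as prescribed; once this dictionary is in hand, the four summand types assemble mechanically from the leading-differential analysis.
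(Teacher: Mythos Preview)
Your proposal is essentially the paper's own argument: the paper's ``proof'' of Theorem~\ref{t1} is precisely the discussion in Section~\ref{sborel} preceding the statement (the theorem is followed by an immediate \qed), and you have correctly recapitulated its ingredients---the $E^2$-term \rref{ebpre1}, the parity-controlled even/odd differential patterns \rref{ediffeven}, \rref{ediffodd} governed by \rref{esh1} and \rref{evii}, the inequalities $p^n<(p^n-1)(p-1)$ and $(p^n-1)(p-1)+1<p^{n+1}-1$ guaranteeing that the lowest-index differential determines the $b$-torsion, and the Tate collapse to $\Z/p[b,b^{-1}]$.

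One small correction: your explanation of the condition $\ell\not\equiv -1\bmod p$ as reflecting ``the good-wedge/derived-wedge gap from Proposition~\ref{p1}'' is not the right mechanism. That condition instead records the Leibniz-rule structure of the differentials (the paper remarks that the differentials on $\sigma^{-2kp^{n-1}}$ for $2\leq k\leq p-1$ are determined by Leibniz): the source classes whose $\sigma$-exponent is divisible by $2p^r$ are cycles at the $r$th stage and pass to the next level of the filtration, so the would-be targets with $\ell\equiv -1\bmod p$ are not hit here and are accounted for in a different summand of the decomposition, not omitted because of a gap in Proposition~\ref{p1}. This does not affect the correctness of your overall strategy, which matches the paper's.
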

\qed

\vspace{3mm}

\section{Construction}\label{sconst}

In this section, we will construct $BP\R$ at an odd prime $p$ by realizing spectrally a $\Z/p$-equivariant
analog of the Brown-Peterson resolution. Throughout this section, we will work in Borel cohomology, and
omit this from the notation. With this convention, the integral homology of $BP\R$, modulo negligible
copies of
$$H\Z_*BP[\sigma^2,\sigma^{-2}],$$
is
$$H\underline{\Z}[\underline{\theta}_1,\underline{\theta}_2,\dots]\wedge_{H\underline{\Z}}
\Lambda_{H\widetilde{\mathcal{L}}_{p-1}}[\widehat{\xi}_1,\widehat{\xi}_2,\dots].$$
(The second term denotes the exterior algebra.)
The k-invariants are defined as cohomological operations (in Borel cohomology), and our job is to prove that they
can be realized spectrally (i.e. on the realization of the partial resolution at each step).

While introducing some negligible terms is necessary, the corresponding non-equivariant homology will remain
even-degree and non-torsion, and there is therefore no obstruction to spectral realization of the negligible
k-invariants, by Brown-Peterson's ``even-odd argument" \cite{bp}. On the other hand, the non-negligible 
k-invariants are non-torsion either rationally or non-torsion with respect to $b$ and hence the obstruction
to spectral realization is $0$ because it vanishes rationally, resp. on geometric (i.e., in our case, Tate) fixed points.

\vspace{3mm}

The Borel cohomology version of the $\Z/p$-equivariant Brown-Peterson resolution is essentially described
as a product over $H\underline{\Z}$ of the following sequences:

Starting with $H\underline{\Z}$, we have
\beg{eresol*}{
\begin{array}{l}
H\underline{\Z}\wedge H\underline{\Z}=\\
\bigwedge_{n\geq 1}(H\underline{\Z}\vee (H\underline{\Z}\wedge T\{
\widehat{\xi}_n\underline{\xi}_n^k\mid k=0,\dots,p-2\})\vee H\underline{\Z/p}
\{\widehat{\xi}_n\underline{\xi}_n^{p-1}\})
[\underline{\theta}_n].
\end{array}
}
The $H\underline{\Z/p}$ arises, in the notation of \cite{st1}, as two copies of 
$H\underline{\Z}$ stuck together by $p$-multiplication, in other words from the fact that
$$\beta(\underline{\mu}_n)=\underline{\theta}_n.$$
Now one has a cofibration sequence
\beg{eresol1}{
\Sigma^{\beta-2}H\widetilde{\mathcal{L}}_{p-1}\r H\underline{\Z}\wedge T\rightarrow \Sigma^{\beta-1}
H\widetilde{\mathcal{L}}_{p-1}
}
(split on $R$-graded coefficients). Then, there is an operation 
\beg{eresol2}{
Q_n^\prime:H\underline{\Z}\r \Sigma^\beta H\widetilde{\mathcal{L}}_{p-1}\{v_n\}
}
which applies \rref{eresol1} to the $\underline{\xi}_n$-wedge summand of \rref{eresol*}. (Here
the notation $v_n$ refers to the $R$-graded indexing of the previous section.)

\vspace{3mm}

\noindent
{\bf Comment:} 
The operations $Q_n^\prime$ form the first stage of our equivariant version of the $BP\R$ tower. 
One may wonder how these elements are identified. To this end, it is beneficial to consider what
happens on the level of geometrical fixed points of homology. We have 
\beg{ephicon1}{\begin{array}{l}\Phi^{\Z/p}(H\underline{\Z}\wedge BP\R)_\star=\\[1ex]
\Phi^{\Z/p}(H\underline{\Z})\wedge \Phi^{\Z/p}(BP\R)_*[b,b^{-1}]=\\[1ex]
A_*[\sigma^{-2}][b,b^{-1}].
\end{array}}
On the other hand, we have
\beg{ephicon2}{\begin{array}{l}\Phi^{\Z/p}(H\underline{\Z}\wedge H\underline{\Z})_\star=\\[2ex]
\Phi^{\Z/p}(H\underline{\Z})\wedge \Phi^{\Z/p}(H\underline{\Z})_*[b,b^{-1}]=\\[2ex]
A_*[\sigma^{-2}, \rho^{-2}][b,b^{-1}].
\end{array}}
Thus, to the eyes of $\mod p$ homology, to get from $H\underline{\Z}$ to $BP\R$ on geometrical
fixed points, we must kill the $\rho^{-2}$-powers in \rref{ephicon2}, while preserving the other
generators. 

To describe how this is done, let us recall briefly the $p=2$ case.
As remarked in \cite{hk}, for $p=2$, a BP-tower construction of $BP\R$ on the level
of geometric fixed points is modelled on the cobar complex of $\Z/2[\rho^{-2}]$, which is the same
thing as the cohomological Koszul complex of the divided polynomial power algebra $\Z/2[\rho^{-2}]^\vee$,
which is an exterior algebra. Thus, the generators are in total degrees $2^i-1$, $i=1,2,\dots$, which is correct,
since the dimension of $v_i$ is $(2^i-1)(1+\alpha)$, and the $\alpha$ may be disregarded on the level 
of geometrical fixed points. In particular, for $p=2$, on geometrical fixed points, the $BP\R$ tower is split (i.e.
the maps are equivalences of cancelling wedge summands), and will work for any choice of $Q^\prime_n$ which
sends $\rho^{-2^n}\mapsto 1$ (up to multiplying by a power of $a$). We also note that modulo possible
error terms in the derived wedge, this condition, together with its dimension, essentially determines $Q^\prime_n$,
since on the good wedge, inverting $a$ is injective. 

For $p>2$, the construction is precisely analogous (the only difference being that for $p=2$, error terms in the derived
wedge do not matter, since we have an a priori model of $BP\R$ coming from $M\R$ via spectral algebra, while for
$p>2$, no a priori geometric model is known, so we circumvent the difficulty by working in Borel cohomology).

On the level of geometric fixed points, however, again, the $BP\R$-tower is modelled on the cohomological
Koszul complex of the divided polynomial power algebra 
\beg{ephicon3}{\Z/p[\rho^{-2}]^\vee.
}
Now \rref{ephicon3} is a tensor product of truncated polynomial algebras of the form
\beg{ephicon3a}{\Z/p[x]/x^p}
where $x$ is the dual of 
\beg{ephicon4}{
\rho^{-2p^{n-1}}, \; n=1,2,\dots
}
This means that our primary operation corresponds to an exterior generator of cohomological degree $1$ and topological degree
$2p^{n-1}$. (The total degree $2p^{n-1}-1$ corresponds to the geometric fixed point version of $v_n$.)
Then there is a secondary generator in cohomological degree $2$ and topological
degree $2p^n$, which gives total degree $2p^n-2$, corresponding
to the geometric fixed point version of $\Phi(v_n)$.

Recalling the short exact sequence of $\underline{\Z}$-modules
$$0\r \underline{\Z}\r \underline{\mathcal{L}}_p\r \widetilde{\mathcal{L}}_{p-1}\r 0,$$
when taking geometric fixed points (i.e. inverting $b$), $H\widetilde{L}_{p-1}$ becomes identified
with $\Sigma H\underline{\Z}$, so we see that ignoring the $\beta$-part, our primary operation
is at least of the right degree $2p^{n-1}$. 

To assert that we actually have the right element, we need to show how $Q^\prime_n$ cancels the element
\rref{ephicon3}. To this end, we recall the computation in \cite{st1} in Borel cohomology:
\begin{equation*}
\begin{array}{ll}
  \underline{\xi}_1  & =(\sigma^{-2}- \rho^{-2}) b^{1-p} \\
  \underline{\xi}_{n+1}  & = (\sigma^{2p^{n-1}-2p^n} \underline{\xi}_n -
    \rho^{-2}\xi_{n})b^{p^n-p^{n+1}}. \\
\end{array}
\end{equation*}
The first equation eliminates the term $\rho^{-2}$ modulo the allowed error terms in \rref{ephicon1},
while the first term of the second equation can be similarly used by induction to eliminate $\rho^{-2p^{n-1}}$.

We will see below how the transpotence also corresponds to $\Phi(v_n)$. However, since it is defined
by canceling the homology element corresponding to 
the ``Kudo element" $x^{p-1}y$ where $x$ is as in \rref{ephicon3a} and $y$ is its Koszul dual, we have no 
statement on the level of the dual Steenrod algebra in that case. Similarly as in the $p=2$ case, again,
on the level of geometric fixed points, our $BP\R$-tower splits, i.e. is described as a wedge of equivalences of
wedge summands.

We also note that the Borel cohomology pieces we add to our tower match \rref{ebpre}. Therefore, by the calculation of
Section \ref{sborel}, adding Borel cohomology pieces, on the Tate level, we recover the $b$-non-torsion part of the coefficients,
which matches the above proposed geometric fixed points. This shows that both constructions correspond correctly.

\vspace{3mm}

Now to continue our construction, smashing the cofibration sequence
$$S^{\beta-2}\r S^0\r T$$
with $H\widetilde{L}_{p-1}$, we have a cofibration sequence
\beg{eresol3}{
H\widetilde{\mathcal{L}}_{p-1}\r H\widetilde{\mathcal{L}}_{p-1}\wedge T\r \Sigma^{\beta-1}H\widetilde{\mathcal{L}}_{p-1}. 
}
The $\underline{\xi}_n$-multiple of $Q_n^\prime$ kills on the $R$-graded coefficients of the
middle term of the $|\underline{\xi}_n|$-suspension of \rref{eresol3} 
everything except the $|\underline{\xi}_n|$-suspension of a copy of $H\underline{\mathcal{L}}_p$
coming from the first term of \rref{eresol3}.
Thus, we obtain a k-invariant
\beg{eresol4}{
\overline{Q}_n:H\widetilde{\mathcal{L}}_{p-1}\{v_n\}\r H\underline{\mathcal{L}}_p\{v_n^2\}.
}

On the other hand, the fiber $F$ of \rref{eresol3} contains a copy of a fiber of 
\beg{eresol5}{
H\underline{\Z}\cdot\underline{\mu}_n\r H\widetilde{\mathcal{L}}_{p-1}\wedge T\cdot\widehat{\xi}_n\underline{\xi}_n^{p-2}
}
which has a factor of $H\underline{\Z}$. We define a k-invariant
\beg{eresol6}{
Q_n^{\prime\prime}:F\r H\underline{\Z}\cdot\Phi(v_n)
}
isomorphically on that factor. 

In more detail, we have a cofibration sequence
\beg{edetail1}{
H\underline{\Z}\r HT\r \Sigma^{\beta-1}H\underline{\Z}
}
and thus also
\beg{edetail2}{
H\widetilde{\mathcal{L}}_{p-1}\r HT\wedge_{H\underline{\Z}}H\widetilde{\mathcal{L}}_{p-1}\r \Sigma^{\beta-1}H\widetilde{\mathcal{L}}_{p-1}.
}
On the other hand, we have a cofibration sequence
\beg{edetail3}{
H\widetilde{\mathcal{L}}_{p-1}\r H\underline{\mathcal{L}}_p\r \Sigma^{2-\beta}H\underline{\Z},
}
or
\beg{edetail4}{
H\underline{\Z}\r \Sigma^{\beta-1}H\widetilde{\mathcal{L}}_{p-1}\r \Sigma^{-1}H\underline{\mathcal{L}}_p.
}
Taking the derived pullback of \rref{edetail2} via the first map \rref{edetail4}, we obtain a cofibration
sequence
\beg{edetail5}{
H\widetilde{\mathcal{L}}_{p-1}\r HX\r H\underline{Z}.
}
However, \rref{edetail5} must split since there is no essential $H\underline{\Z}$-module map
$$H\underline{\Z}\r\Sigma H\widetilde{\mathcal{L}}_{p-1}$$
(by dimensional reasons).

This is the {\em even pattern} in the Borel cohomology $\Z/p$-equivariant
Brown-Peterson resolution. An illustration is given in Figure 9 of the Appendix.

\vspace{3mm}

The {\em odd pattern} is obtained essentially by smashing this over $H\underline{\Z}$ with
$H\widetilde{\mathcal{L}}_{p-1}$, but the negligible term appears at a slightly different place.

The first k-invariant comes from composing the ``connecting map"
$$\widetilde{\mathcal{L}}_{p-1}\r \underline{\mathcal{L}}_p$$ 
with the second map \rref{eresol3}, applied to the
$\widehat{\xi}_n$-summand of \rref{eresol*} smashed over $H\underline{\Z}$ with 
$H\widetilde{\mathcal{L}}_{p-1}$. This gives a negligible k-invariant
$$\overline{\overline{Q}}:H\widetilde{\mathcal{L}}_{p-1}\r H\underline{\mathcal{L}}_p\{v_n\}$$
and its fiber $F^\prime$ has a quotient of $H\underline{\Z}\cdot\underline{\xi}_n$. We obtain
a k-invariant
\beg{eresol7}{
Q_n^{\prime}:F^\prime\r H\underline{\Z}\cdot v_n
}
by applying an isomorphism on this quotient. The argument follows \rref{edetail1}-\rref{edetail5}.

On the other hand, we can apply the second map \rref{eresol1} to the $\underline{\xi}_n^{p-2}\cdot\widehat{\xi}_n$
copy in \rref{eresol*} to obtain the k-invariant
$$Q_n^{\prime\prime}:F^{\prime\prime}\r H\widetilde{\mathcal{L}}_{p-1}\cdot \Phi(v_n)$$
where $F^{\prime\prime}$ is the fiber of \rref{eresol7}. 

An illustration of the odd pattern is given in Figure 10 of the Appendix.

\vspace{5mm}
\section{Comparison with Johnson-Wilson spectra}\label{sjw}

In this section, we shall investigate Johnson-Wilson spectra analogs of $BP\R$. Denote by
$I_n$ the ideal $(p,v_1,\dots,v_{n-1})\subset BP_*$.

\begin{proposition}\label{tetale}
The spectrum $(E(n)\wedge\dots \wedge E(n))^\wedge_{I_n}$ is an $A_\infty$-ring spectrum and
the space of its $A_\infty$-selfmaps is homotopically discrete. (The same statement holds for 
other ``flavors" of $E(n)$, notably $E_n$.)
\end{proposition}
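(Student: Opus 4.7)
The plan is to apply the Robinson--Baker obstruction theory to realize an $A_\infty$-structure and to compute the self-map space, the central input being formal \'etaleness of the coefficient ring of the completed smash product.

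First, I would identify
$$R_* \;=\; \pi_*\!\bigl((E(n)\wedge\cdots\wedge E(n))^\wedge_{I_n}\bigr)$$
using Landweber exactness. Before completion, each factor $E(n)\wedge E(n)$ classifies pairs consisting of a height $\le n$ formal group law together with a strict isomorphism to the $E(n)$-formal group, and iterating gives tuples of such strict isomorphisms. Completing at $I_n$ restricts to the locus where the formal group law has full height $n$, on which the moduli of strict isomorphisms is pro-\'etale over the Lubin--Tate deformation space; this is the standard calculation underlying the Hopkins--Miller theorem for $E_n$ and applies in the same way to the $E(n)$-flavor.

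Second, I would verify that $R_*$ is formally \'etale over $A_* := \pi_*(E(n)^\wedge_{I_n})$, so that the topological Andr\'e--Quillen (equivalently, Hochschild) cohomology groups
$$HH^{s,t}_{A_*}(R_*, R_*) \;=\; 0 \qquad \text{for } s \ge 1$$
vanish in all bidegrees that control Robinson's tower of successive obstructions. Applying the Robinson--Baker obstruction theory to Landweber exact spectra, the vanishing of these groups yields both the existence of the $A_\infty$-ring structure and homotopy discreteness of the space of $A_\infty$-self-maps: the Postnikov-type layers governing $\mathrm{map}_{A_\infty}(R,R)$ are controlled by the same $HH$-groups, so the space collapses onto $\pi_0$, the set of ring-theoretic endomorphisms of $R_*$. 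Swapping $E(n)$ for $E_n$ or another height-$n$ Landweber exact flavor does not affect the argument, since the $I_n$-adic data are the same.

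The main obstacle will be the \'etaleness assertion after completion: the uncompleted multi-fold smash product is \emph{not} \'etale over $E(n)_*$, and the argument depends crucially on the fact that $I_n$-completion isolates the height-exactly-$n$ open substack on which pro-\'etaleness holds. Care is needed to confirm that the Andr\'e--Quillen obstruction groups still vanish in the $I_n$-adically completed setting, i.e.\ that derived completion does not contribute extra terms; once that is in hand the rest of the proof is formal and follows the template of \cite{robinson, baker}.
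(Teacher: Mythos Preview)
Your proposal is correct and follows essentially the same Robinson--Baker template as the paper: reduce existence and rigidity of the $A_\infty$-structure to vanishing of Hochschild-type obstruction groups, and obtain that vanishing from an \'etaleness statement about the coefficients of the iterated smash product after passing to height exactly $n$. The only difference is one of packaging: the paper reduces modulo $I_n$ and invokes the explicit Robinson formula exhibiting $E(n)_*E(n)/I_n$ (and hence $\Sigma(n,2k)$) as a filtered colimit of \'etale $K(n)_*$-algebras, whereas you phrase the same fact geometrically as formal \'etaleness of the completed ring over $\pi_*(E(n)^\wedge_{I_n})$ via the moduli interpretation of strict isomorphisms; these are equivalent inputs and lead to the same conclusion.
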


\begin{proof}
We follow the method of Robinson \cite{robinson} and Baker \cite{baker}. 
We first recall that the ideal generated by $I_n$ by inclusions 
$$BP_*\r (BP\wedge\dots\wedge BP)_*$$
via one of the factors does not depend on the factor. (See \cite{ravenel}, Theorem A2.2.6.)

Next, let
$$E(n,k)=\bigwedge_{k} E(n),$$
$$\Sigma(n,k)_*=(\bigwedge_{k}E(n))_*/I_n.$$
One writes $\Sigma(n)_*=\Sigma(n,1)_*$.
The method of Robinson \cite{robinson}, Baker \cite{baker} formula (3.10) reduces this task to proving
a vanishing result for a certain $\mathcal{E}xt$-group
\beg{hochschildvanish}{\mathcal{E}xt^{s>0}_{\Sigma(n,2k)}(\Sigma(n,k),\Sigma(n,k))=0}
where $s$ denotes the cohomological degree. 

To define the $\mathcal{E}xt$-groups concerned, we denote by $E$ the smash of $k$ copies of $E(n)$,
the ring structore on $E_*E$ is given by the composition
\beg{ecompos}{
(E\wedge E)_*\otimes (E\wedge E)_*\r (E\wedge E\wedge E\wedge E)_*\r (E\wedge E)_*
}
where the first map is give by sending the first two coordinate to the first and last coordinate in the target,
and the last two coordinates into the middle two coordinates (without changing order internally),
and the second map is given by multiplication in the first two and the last two coordinates. The tensor
product is over $\Z$.The left $E_*E$-module structure on $E_*$ is defined analogously, replacing the
last two coordinates $E\wedge E$ in the source and middle two coordinates in the target of \rref{ecompos} 
by $E$. It is customary to refer to this construction as ``Hochschild cohomology"
(\cite{robinson}), even though this is not correct in full generality.

To prove \rref{hochschildvanish}, we recall from \cite{robinson} that
\beg{ehochvanish1}{\displaystyle
\Sigma(n)_*=\left(\bigotimes_{i\geq 1}\right)_{K(n)_*}\left(K(n)_*\prod \frac{K(n)_*[t_i]}{(v_nt_i^{p^n-1}-v_n^{p^i})}
\right)
}
In our present setting, we have duplicate copies of the coordinates $t_i$, but the essential point is that the $E_*E/I_n=
\Sigma(n,2k)$
forms a direct limit of \'{e}tale extensions of $K(n)_*$, and thus the given $\mathcal{E}xt^{>0}=0$.

\end{proof}

\vspace{5mm}
We claim that for all $n$, there is a $\Z/p$-action on $(\bigwedge_{p-1} E(n))^\wedge_{I_n}$ compatible
with the $\Z/p$-action on the non-equivariant spectrum underlying the $BP\R$ we constructed above. To this end,
we need to make a couple of remarks. First of all, the conjecture of Hill, Hopkins and Ravenel concerned the 
existence of a $\Z/p$-equivariant spectrum, which we denote by $BP\R^{HHR}$, which would satisfy
\beg{bprhhr}{
BP\R^{HHR}_{\{e\}}=\bigwedge_{p-1}BP.
}
Our construction gives the pattern \rref{ebpre} above.
We see \cite{aaa} that this gives the right answer at the prime $p=3$, but is smaller than the coefficients of 
\rref{bprhhr} for $p\geq 5$. One notes however that   
$BP\R^{HHR}$ can be obtained from $BP\R$ by taking a wedge with even suspensions
of copies of the additive norm $\Z/p_+\wedge BP$.

Now we claim that our construction 
determines a $\Z/p$-action on $(\bigwedge_{p-1}BP)_*$. which sends the universal formal group law 
to strictly isomorphic formal group laws (recall that a strict isomorphism between isomorphic formal group laws on
a torsion-free ring is uniquely determined). In a graded
ring $R\supseteq BP_*$, maps $BP_*\r R$ which carry
the universal formal group law to an isomorphic formal group law are characterized by elements $\overline{t}_i\in R$ of 
the same degree as $t_i$, which, when substituted for $t_i$ into $\eta_R(v_n)$, give the image of $v_n$.

In fact, we do not need the whole construction of $BP\R$, its first k-invariant \rref{eresol2}
determines the information we need. Non-equivariantly,
this gives an operation
\beg{enoneqq}{
H\Z\r \bigvee_{p-1}\Sigma^{2p^n-1}H\Z.
}
We know that this operation is just the integral $Q_n$ landing in an $H\Z$ wedge summand of the right hand side.
It is, further, invariant under $\Z/p$-action. This identifies the wedge summand which is supported by $v_n$.
In homotopy groups, on $\mathcal{L}_{p-1}$, it is an element which reduces modulo $p$ to an element
of $L_{p-1}$ which is annihilated by $1-\gamma$ where $\gamma$ is the generator of $\Z/p$.

Since, however, we also know from \cite{bp} that $v_n$ is only determined modulo $I_n$, we see that the
action on $v_n$ given by the $\Z/p$-action on \rref{enoneqq} necessarily sends the universal FGL to 
isomorphic FGL's.

Along with \rref{tetale}, this then implies 

\begin{theorem}\label{ttetale}
There exists a strict action of $\Z/p$ (in the ``naive" sense) on 
$$(\bigwedge_{p-1}E(n))^\wedge_{I_n}$$
(and its $I_n$-complete variants, replacing, for example, $E(n)$ by $E_n$) 
by morphisms of $A_\infty$-ring spectra, which  on coefficients
coincides with the action on compositions of $(p-2)$ strict isomorphisms of FGL's given by the
first k-invariant \rref{enoneqq}.
\end{theorem}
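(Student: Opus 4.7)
The plan is to bootstrap Proposition \ref{tetale} together with the formal-group-theoretic data extracted from the first k-invariant \rref{enoneqq}. The argument proceeds in three steps: (i) define a $\Z/p$-action on $\pi_\ast(E)$, where $E := (\bigwedge_{p-1}E(n))^\wedge_{I_n}$; (ii) check that each group element acts via a strict isomorphism of the universal formal group law; (iii) lift uniquely to $A_\infty$-selfmaps via Robinson-Baker, and upgrade the result to a strict action using the discreteness of the mapping space.

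First I would construct the coefficient-level action. The k-invariant \rref{enoneqq} singles out, for each $n$, an element $v_n$ in a distinguished $\mathcal{L}_{p-1}$-summand of $\pi_\ast(\bigwedge_{p-1}BP)$; more precisely, $v_n$ reduces modulo $p$ to an element of $L_{p-1}$ annihilated by $1-\gamma$, where $\gamma$ generates $\Z/p$. Cyclically rotating the $(p-1)$ smash factors of $\bigwedge_{p-1}BP$ then gives a $\Z/p$-action on $\pi_\ast$ compatible with this identification. Since $I_n$ does not depend on the chosen factor (as recalled from \cite{ravenel}, Theorem A2.2.6, in the proof of Proposition \ref{tetale}), the action descends to $\pi_\ast(\bigwedge_{p-1}E(n))/I_n$ and extends to $\pi_\ast(E)$ by $I_n$-completeness, identically for $E_n$-flavored variants. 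Each $\gamma\in\Z/p$ then sends the universal FGL pulled back from any smash factor to an isomorphic one, the isomorphism being uniquely recorded by specific $\overline{t}_i$-values as observed in the discussion preceding the theorem.

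Finally I would apply Proposition \ref{tetale}: the space of $A_\infty$-selfmaps of $E$ is homotopically discrete, so by the Robinson-Baker machinery every $\pi_\ast$-level automorphism preserving the universal FGL up to strict isomorphism lifts uniquely to an $A_\infty$-selfmap. Discreteness of the mapping space also forces two $A_\infty$-selfmaps inducing the same coefficient automorphism to coincide, so the group-law relations $\gamma\cdot\gamma' = \gamma\gamma'$ valid in $\pi_\ast(E)$ transfer verbatim to the $A_\infty$-selfmap level, yielding a strict naive $\Z/p$-action. The main obstacle I anticipate is step (i): one must verify that the permutation action of the smash factors really agrees, on the $\mathcal{L}_{p-1}$-summand containing $v_n$, with the canonical $\Z/p$-rotation predicted by the k-invariant. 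This uses both the Brown-Peterson indeterminacy of $v_n$ modulo $I_n$ and the fact that \rref{enoneqq} is $\Z/p$-invariant and lands in an $H\Z$-wedge summand uniquely determined by $v_n$.
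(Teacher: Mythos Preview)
Your overall strategy is exactly the paper's: the theorem is deduced by combining Proposition \ref{tetale} (discreteness of the $A_\infty$-selfmap space) with the coefficient-level $\Z/p$-action coming from the first k-invariant, then using discreteness to rigidify the group law. Steps (ii) and (iii) are fine and essentially reproduce the paper's reasoning.

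However, there is a genuine error in step (i). You write that ``cyclically rotating the $(p-1)$ smash factors of $\bigwedge_{p-1}BP$ then gives a $\Z/p$-action on $\pi_\ast$.'' Cyclic permutation of $p-1$ factors yields a $\Z/(p-1)$-action, not a $\Z/p$-action; for $p>2$ these are different groups, and for no $p$ does rotating $p-1$ objects produce an order-$p$ symmetry. The representation $\mathcal{L}_{p-1}$ is $(p-1)$-dimensional, but its $\Z/p$-action arises as a quotient (or subrepresentation) of $\Z^p$ with its cyclic $\Z/p$-permutation, not as rotation of $p-1$ coordinates. So the action you describe is not the one in the theorem, and your anticipated obstacle (``verify that the permutation action of the smash factors really agrees \dots'') cannot be overcome as stated, because the two actions live in different groups.

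In the paper the $\Z/p$-action on $(\bigwedge_{p-1}BP)_*$ is not obtained by permuting smash factors at all: it is the action induced on the underlying non-equivariant coefficients by the $\Z/p$-equivariant structure of $BP\R$ itself (the equivariant Brown--Peterson tower of Section \ref{sconst}). The paper then observes that this action is determined on $v_n$ by the first k-invariant \rref{enoneqq}, and that since $v_n$ is only well-defined modulo $I_n$, the action necessarily carries the universal FGL to strictly isomorphic ones. You should replace your description of the action by this one; once that is done, the remainder of your argument goes through and matches the paper.
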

\qed

It is worth noting that while these considerations produce
 many equivalent actions on chains of $(p-2)$ strict isomorphisms
of FGL's, for $p>2$, there does not appear to be one canonical construction such as the $-i_F(x)$-series
associated with $BP\R$ for $p=2$.

\vspace{3mm}

It makes sense to conjecture that there exists an $A_\infty$-structure on $BP\R^{HHR}$ and
an $A_\infty$-map from $BP\R^{HHR}$ into the $\Z/p$-equivariant Borel-complete spectrum defined by 
Theorem \ref{ttetale}. At the moment, however, several ingredients are missing toward proving this, 
the first of which is the proof of the existence of an $A_\infty$-ring structure (or even a non-rigid ring structure)
on our construction of $BP\R^{HHR}$.

\vspace{3mm}

It is important to note that there also exists a $\Z/p$-action on $E_{p-1}$ induced from the 
the $\Z/p$-subgroup of the Morava stabilizer group. (For simplicity, let us assume $p\geq 5$.)
The $\Z/p$-action on coefficients is discussed in Nave \cite{nave}, Symonds \cite{symonds}.
Essentially, from the representation-theoretical point of view,
the non-negligible part is an exterior algebra on $\mathcal{L}_{p-1}$, tensored with
a polynomial algebra on $Nv_n$. (This is in no contradiction with the homotopy-theoretical point of view,
where we are dealing with the completion of a Laurent series ring.)

The generating $\mathcal{L}_{p-1}$ contains all
of 
\beg{enave}{u,u_{n-1}u,\dots, u_1u} 
where $u_i$ are the Lubin-Tate generators (i.e. are related to 
$v_i$ by change of normalization) and $u^{-1}$ is a root of $v_n$. In fact, it is proved in \cite{nave}
that $u$ generates this $\mathcal{L}_{p-1}$-representation and the elements \rref{enave}, 
in the order listed, are related by applying $1-\gamma$.

It was conjectured in \cite{hhr} that there should be a $\Z/p$ equivariant map from 
\beg{ehhrconj}{BP\R^{HHR}\r(E_{p-1})_{\Z/p}}
which would restict to an isomorphism of the $\mathcal{L}_{p-1}$-representations 
between the $\mathcal{L}_{p-1}$ containing $v_{1}$ and the $u^{-p^{p-1}+1}$-multiple
of \rref{enave}. Such a morphism of representations certainly exists, but making this assignment 
implies that the non-equivariant elements $v_i$ in our construction are sent to $0$ for $i>1$.
This may seem absurd, since the element $u^{-p^{p-1}+1}$ is of type $v_{p-1}$, and
is supposed to be inverted. The answer, however, is of course that there is no real contradiction, 
since when dealing with a chain of $p-2$ strict isomorphisms of the universal formal group law, there
could be many elements of type $v_{p-1}$, incluing some which contain a summand which is a unit
multiple of $t_{p-1}$. Thus, the element $u^{-p^{p-1}+1}$ could be an image of a generator of
a separate copy of $BP$ in $BP\R_{\{e\}}$. In fact, on the level of non-equivariant coefficients with
$\Z/p$-action, such maps are easily shown to exist. The existence of the comparison map \rref{ehhrconj}
on the level of $\Z/p$-equivariant spectra is at present still open.

\vspace{5mm}

\section{Odd prime real orientations and $E_\infty$-constructions}\label{seinf}

\vspace{3mm}

One knows (see \cite{lms}) that $MU$ is an $E_\infty$-ring spectrum. On the other hand, we have the
complex orientation
\beg{eeinf1}{\Sigma^{-2}\C P^\infty\r MU.}
Thus, (after appropriate discussion of cofibrancy), the universal property allows us to extend \rref{eeinf1} to
an $E_\infty$-map 
\beg{eeinf2}{C_\bullet\Sigma^{-2}\C P^\infty\r MU
}
where $C_\bullet E$ denotes the unital free $E_\infty$-ring spectrum on a spectrum $E$ with unit (meaning
a morphism $S\r E$). We remark in fact that \rref{eeinf2} is a retraction (i.e. that in the derived category of
spectra, the target splits off as a direct summand, as an ordinary ring spectrum). This is
due to the fact that $C_\bullet\Sigma^{-2}\C P^\infty$ is, by definition, complex-oriented, so the complex
orientation gives a section of \rref{eeinf2} in the category of ordinary commutative ring spectra.

\vspace{3mm}
A similar story is true, essentially without change, for Real-oriented $\Z/2$-equivariant spectra: Denoting by 
$\mathbb{S}^1$ the unit sphere in $\C$ with the $\Z/2$-action of complex conjugation, we have a Real
orientation
\beg{eeinf1a}{B\mathbb{S}^1\r M\R,}
which gives rise to a $\Z/2$-equivariant $E_\infty$-map
\beg{eeinf2a}{C_\bullet\Sigma^{-2}B\mathbb{S}^1\r M\R}
which, again, has a section in the category of ordinary $\Z/2$-equivariant commutative ring spectra (and hence,
in particular, splits in the category of $\Z/2$-equivariant spectra).

\vspace{3mm}
This raises the question what happens for a prime $p>2$. Hahn, Senger and Wilson \cite{oddorient}
defined a version of Real orientations of $\Z/p$-equivariant spectra for odd primes $p$ as follows. One denotes
$$\mathbb{T}=B_{\Z/p}\mathcal{L}_{p-1}.$$
The $\Z/p$-equivariant space $\mathbb{T}$ can, indeed, be identified with the subset of 
$(S^1)^{\Z/p}$ consisting of all tuples $(z_1,\dots,z_p)$ where 
$$\prod_{j=1}^p z_j=1.$$
Note that in particular, 
\beg{eeinf10}{\mathbb{T}^{\Z/p}=\Z/p.
}
Selecting once and for all any (non-equivariant path from $0$ to $1$ in \rref{eeinf10} specifies a 
$\Z/p$-equivariant map
\beg{eeinf11}{
\widetilde{\Z/p}\r \mathbb{T},
}
which, in turn, by adjunction, gives a $\Z/p$-equivariant map
\beg{eeinf12}{
\iota:\Sigma\widetilde{\Z/p}\r B\mathbb{T}.
}
The right-hand side is, again, equivalent whether whether we take $B_{\Z/p}$ or the ``naive" Schubert cell
construction. In fact, we have the following

\begin{proposition}\label{pfoling1}
Let $\mathbb{T} = \{(z_1, \cdots, z_p) | z_i \in S^1, \prod z_i = 1\}$ be the $\mathbb{Z}/p$-equivariant
group so that $\mathbb{Z}/p$ acts by permuting the coordinates. Then the equivariant
classifying space $B_{\mathbb{Z}/p} \mathbb{T}$ is equivalent to the fiber of 
\begin{equation}
  \label{eq:BT}
  B((S^1)^{\mathbb{Z}/p}) \to BS^1.
\end{equation}
In \eqref{eq:BT}, $B$ denotes the bar construction,
$(S^1)^{\mathbb{Z}/p}$ is the $\{(z_1, \cdots, z_p) |
z_i \in S^1\}$ and the map is induced by multiplying the coordinates.  
\end{proposition}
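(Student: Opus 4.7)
The plan is to deloop the tautological short exact sequence of $\Z/p$-equivariant topological abelian groups
\[
1 \to \mathbb{T} \to (S^1)^{\Z/p} \xrightarrow{\mu} S^1 \to 1,
\]
in which $\Z/p$ permutes the coordinates of $(S^1)^{\Z/p}$ and acts trivially on the quotient $S^1$ (since $\mu$ is symmetric in its arguments), and $\mathbb{T}$ is the kernel by definition. The first step is to observe that $\mu$ is an equivariant principal $\mathbb{T}$-bundle: it is a surjective homomorphism of compact abelian Lie groups, hence locally trivial, and the local trivializations can be chosen $\Z/p$-equivariantly because the action on each fiber is translation by $\mathbb{T}$.

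Next, I would apply the bar construction $B$ to each term. Since $B$ is built simplicially out of finite products of the group, it commutes with $H$-fixed points for every subgroup $H \le \Z/p$, and it converts the short exact sequence above into a $\Z/p$-equivariant fiber sequence of spaces
\[
B\mathbb{T} \to B((S^1)^{\Z/p}) \to BS^1.
\]
The middle term carries the permutation action and the rightmost term carries the trivial action, matching the two outer spaces of \eqref{eq:BT} exactly. After this step, the remaining task is to identify $B\mathbb{T}$, equipped with its induced permutation action, with the equivariant classifying space $B_{\Z/p}\mathbb{T}$.

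The hard part will be this last identification. For an abelian $\Z/p$-equivariant topological group $A$, one needs that $BA$ (with the induced action) represents equivariant principal $A$-bundles; via the natural identification $(BA)^H \simeq B(A^H)$ this reduces to the fact that equivariant principal $A$-bundles over a $\Z/p$-CW complex $X$ are determined orbit-by-orbit by ordinary $A^H$-bundles on the fixed-point strata $X^H$, which is standard but worth spelling out carefully. As an internal consistency check, taking $\Z/p$-fixed points of \eqref{eq:BT} should produce the fibration $B(\Z/p) \to BS^1 \xrightarrow{\cdot p} BS^1$, whose fiber is indeed $B(\Z/p) = B(\mathbb{T}^{\Z/p})$, in agreement with \eqref{eeinf10}.
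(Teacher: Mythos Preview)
Your overall strategy---delooping the short exact sequence $1 \to \mathbb{T} \to (S^1)^{\Z/p} \to S^1 \to 1$ via the bar construction and then identifying the resulting fiber $B\mathbb{T}$ (with its induced $\Z/p$-action) with the genuine equivariant classifying space $B_{\Z/p}\mathbb{T}$---is a natural route, and it differs from the paper's: the paper computes $(B_{\Z/p}\mathbb{T})^{\Z/p}$ directly from the Lashof--May formula \cite{LashofMay}, by classifying sections $\Z/p \to \mathbb{T} \rtimes \Z/p$ up to $\mathbb{T}$-conjugacy and finding exactly one class.

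There is, however, a genuine gap in your justification of the key step $B\mathbb{T} \simeq B_{\Z/p}\mathbb{T}$. You write that ``equivariant principal $A$-bundles over a $\Z/p$-CW complex $X$ are determined orbit-by-orbit by ordinary $A^H$-bundles on the fixed-point strata $X^H$,'' calling this standard. It is not: over a single $G$-fixed point, $G$-equivariant principal $A$-bundles are twisted $A$-torsors, classified by $H^1(G,A)$, whereas there is only the trivial $A^G$-bundle over a point. Correspondingly, $(B_G A)^H$ has $|H^1(H,A)|$ components, while your $(BA)^H \simeq B(A^H)$ is always connected; the naive $BA$ models $B_G A$ exactly when $H^1(H,A)=0$ for every $H \le G$. (For a counterexample, let $G=\Z/2$ act on $A=\Z$ by sign: then $H^1(\Z/2,\Z_{\mathrm{sgn}})=\Z/2$, the bar construction $B\Z$ has a single fixed point, but $B_{\Z/2}\Z_{\mathrm{sgn}}$---modeled by $S^1$ with the inversion action---has two.)

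For $A=\mathbb{T}$ the required vanishing $H^1(\Z/p,\mathbb{T})=0$ does hold, and proving it is precisely the content of the paper's argument: showing that any two order-$p$ elements of $\mathbb{T}\rtimes\Z/p$ lying over a generator of $\Z/p$ are $\mathbb{T}$-conjugate is exactly the statement that this $H^1$ is trivial. You could alternatively read it off the long exact sequence in $\Z/p$-cohomology associated to $1\to\mathbb{T}\to(S^1)^{\Z/p}\to S^1\to 1$, using that $(S^1)^{\Z/p}$ is coinduced (hence acyclic by Shapiro) and that the induced map on $H^0$ is the surjection $S^1 \xrightarrow{p} S^1$. Either way, this vanishing is the actual substance that your outline skips; the reduction you propose replaces it with a false general principle. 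Your closing ``consistency check'' verifies only that the bar construction commutes with fixed points here, not that the naive $B\mathbb{T}$ agrees with $B_{\Z/p}\mathbb{T}$.
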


\vspace{3mm}
\noindent
{\bf Comment:} The bar construction is equivariant and also preserves the ``multiplicative norm" on space level,
so the source of \rref{eq:BT} is a product of $p$ copies of $\C P^\infty$ with action by permutation of coordinates,
while the target of \rref{eq:BT} is a fixed copy of $\C P^\infty$.

\begin{proof}
Notice that on fixed points, \eqref{eq:BT} is $p: BS^1 \to
BS^1$, (meaning the bar construction applied to the $p$'th power map on $S^1$), 
so it suffices to show that $(B_{\mathbb{Z}/p} \mathbb{T})^{\mathbb{Z}/p} \simeq B \mathbb{Z}/p$, as the underlying
level is easy. By \cite[Theorem 10]{LashofMay},
\begin{equation}
  \label{eq:LashofMay}
(B_{\mathbb{Z}/p} \mathbb{T})^{\mathbb{Z}/p} \simeq \coprod_{[\Lambda]} B W_{\mathbb{T} \rtimes \mathbb{Z}/p}(\Lambda),
\end{equation}
where the index is over subgroups $\Lambda \subset \mathbb{T} \rtimes \mathbb{Z}/p$ isomorphic to $\mathbb{Z}/p$ and
intersecting $\mathbb{T}$ trivially, up to $ \mathbb{T} \text{-conjugation}$.
Write $\mathbb{Z}/p = \langle \gamma \rangle$ and take $x = ((z_1, \cdots, z_p),\gamma) \in \mathbb{T} \rtimes \mathbb{Z}/p$. Then 
\begin{equation*}
x^p = (( \sqcap z_i, \cdots,  \sqcap z_i), \gamma^p) = e \text{ and } x^{-1} = ((z_2^{-1}, \cdots, z_{p}^{-1}, z_1^{-1}), \gamma^{-1}).
\end{equation*}
This means any element $x$ of $ \mathbb{T} \rtimes \mathbb{Z}/p$ not contained in $\mathbb{T}$ generates a $\Lambda$.
However, these are all conjugate. Let $y = ((y_1, \cdots, y_p), \gamma)$ to be
determined. Setting 
\begin{equation*}
yxy^{-1} = ((y_1x_py_p^{-1}, y_2x_1y_1^{-1}, \cdots, y_px_{p-1}y_{p-1}^{-1}), \gamma) =
((1,1,\cdots,1), \gamma),
\end{equation*}
we obtain
\begin{align*}
  & y_{p}/y_1 = x_p \\
  & y_1/y_2 = x_1 \\
  &  \cdots \\
  & y_{p-1}/y_p = x_{p-1}
\end{align*}
It is not hard to see that this allows a solution with $\prod y_i = 1$.
Now, using $\Lambda = \langle((1,1,\cdots,1), \gamma)\rangle$ without loss of generality,
one obtains $(B_{\mathbb{Z}/p}\mathbb{T})^{\mathbb{Z}/p} \simeq B \mathbb{Z}/p$ from \eqref{eq:LashofMay}.
\end{proof}

Note that in the proof, the only thing we used about $S^1$ is that it is an
abelian compact Lie group. However, the same argument often extends to more general groups.
For example, we can show that the fiber of 
$$B(\mathbb{Z}^{\mathbb{Z}/p}) \to B
\mathbb{Z}$$ 
is 
$B_{\mathbb{Z}/p} \mathcal{L}_{p-1},$
recalling  $\mathcal{L}_{p-1} = \{(a_1, \cdots, a_p) | z_i \in \mathbb{Z}, \sum
a_i = 0\}$. In other words, $\mathbb{T} \simeq B_{\mathbb{Z}/p} \mathcal{L}_{p-1}$.
Although the group $\Z$ is non-compact, the formalism works in this case, since the family consits of finite subgroups.
Instead of \cite[Theorem 10]{LashofMay}, we use the fact that $H^1(\Z/p,\mathcal{L}_{p-1})=\Z/p$, by
the long exact se	quence in cohomology induced by the short exact sequence
$$0\r\Z\r \mathcal{L}_{p}\r\mathcal{L}_{p-1}\r 0.$$

Hahn Senger, and Wilson \cite{oddorient} define a $\Z/p$-equivariant commutative ring spectrum $E$ to be
{\em Real-oriented} if the following diagram can be completed:
\beg{eeinf20}{
\diagram
\Sigma\widetilde{\Z/p}\dto_\iota\rto &\Sigma\widetilde{\Z/p}\wedge E\\
B\mathbb{T}\urdotted|>\tip &
\enddiagram
}
where the horizontal map is the smash of $\Sigma\widetilde{\Z/p}$ with the unit. If this happens, one can 
show that the Schubert-cell spectral sequence converging to $E^*B\mathbb{T}$ (see Proposition \ref{pfoling1})
collapses. 

In effect, we claim that for a Real-oriented $\Z/p$-equivariant spectrum $E$, modulo "negligible parts,"
i.e. copies of the additive norm from $\{e\}$ to $\Z/p$ on $E_{\{e\}}$, we have
\beg{esscollapse}{
E\wedge B\mathbb{T}\sim \bigwedge_E (E\vee E\wedge \Sigma \widetilde{\Z/p})\wedge_E E[Nx]
}
(we mean this up to homotopy, not in any coherent sense). In fact, the $E\wedge \Sigma \widetilde{\Z/p}$
occurs by the definition of $\Z/p$-equivariant Real orientation, while the class $Nx$ is simply the multiplicative norm
(on the level of spaces) of the complex orientation of $E_{\{e\}}$ (whose existence also follows from
the definition of Real orientation by forgetting the $\Z/p$-action), restricted by the inclusion map
$$B\mathbb{T}\r B((S^1)^{\Z/p}.$$

By the observation, the ring $E^*[Nx]$ splis off as a canonical summand of $E^*B\T$. Multiplication on $B\T$
then gives
rise to a $p$-valued formal group law in the sense of Buchstaber \cite{buch, buch1}, see Definition 1.2 of 
\cite{buch1}. To recapitulate, Buchstaber defines a $p$-valued formal group law as a polynomial of the
form
\beg{buchh1}{
\Theta(x,y)=Z^p-\theta_1(x,y)Z^{p-1}+\dots-\theta_p(x,y)
}
where $\theta_i(x,y)$ are power series which satisfies the conditions
$$\displaystyle \Theta(x,0)=Z^n+\sum_{i=1}^n(-1)^i{n\choose i}x^iZ^{n-i},$$

\beg{multiassoc}{\Theta(\Theta(x,y),z)=\Theta(x,\Theta(y,z)),}

$$\Theta(x,y)=\Theta(y,x).$$
The associativity condition \rref{multiassoc} requires some explanation.

If $F(x,y)$ is an ordinary formal group law, then a $p$-valued formal group law can be obtained as
$$\Theta(x,y)=\prod_{j=1}^n(Z-F_j(x,y))$$
where 
$$F_j(x,y)=
\exp_F((\log_Fx)^{1/p}+\zeta_p^j(\log_Fy)^{1/p})^p.$$
In the associativity condition, we can similarly write terms in three variables
\beg{multiassoc1}{F_{jk}(x,y,z)=
\exp_F((\log_Fx)^{1/p}+\zeta_p^j(\log_Fy)^{1/p}+\zeta_p^k(\log_Fz)^{1/p})^p
}
and the multivalued formal sum in three variables should be
\beg{etriple}{\Theta(x,y,z)=\prod_{j,k=1}^n(Z-F_{jk}(x,y,z)).}
This can be processed in two different ways into expressions involving only the series $\theta_j$:

We can write \rref{multiassoc1} as
$$\begin{array}{l}F_{jk}(x,y,z)=
\\
\exp_F((\log_Fx)^{1/p}+\zeta_p^j(((\log_Fy)^{1/p}+
\zeta_p^{k-j}(\log_Fz)^{1/p})^p)^{1/p})^p\end{array}$$
and thus express \rref{etriple} in terms of 
$$\theta_i(x,F_j(y,z)).$$
The resulting expression, however, is symmetrical in the $F_j(y,z)$, which allows them to be reduced to 
$\theta_s(y,z)$. Permuting the variables $x,y,z$ cyclically and doing the same thing gives another 
expression. The equality of both expressions is what one means by \rref{multiassoc}.

\vspace{3mm}

For $p=2$, Real-oriented spectra produce an ordinary formal group law due to the fact that the orientation class
is a polynomial generator. As we saw in \rref{esscollapse} (and as is, in some sense, the
theme of this paper), however, for $p>2$ the orientation class
is only an ``exterior generator" from the point of view of representation theory, which is why we have to
restrict to the polynomial ring on the (space-level) multiplicative norm of the non-equivariant orientation
class, which leads only to a $p$-valued formal group law. Note that the collapse \rref{esscollapse} is necessary
to assure that the ring $E^*[Nx]$ indeed survives as a canonical summand of the cell spectral
sequence given by Proposition \ref{pfoling1}.

\vspace{5mm}
For our present purposes, we recall from \cite{lms} that the 
$\Z/p$-equivariant Spanier-Whitehead dual of $\widetilde{\Z/p}$ is given by
\beg{eeinf30}{
D\widetilde{\Z/p}=\Sigma^{-\beta}\widetilde{\Z/p}.
}
Thus, 
$$\Sigma^{-1-\beta}\widetilde{\Z/p}\wedge B\mathbb{T}$$
becomes a unital spectrum via $\iota$ and the spectrum
\beg{eeinf31}{
C_\bullet \Sigma^{-1-\beta}\widetilde{\Z/p}\wedge B\mathbb{T}
}
becomes Real-oriented in the sense of \cite{oddorient}. It seems therefore reasonable to conjecture that
after localizing at $p$, $BP\R$ will split off \rref{eeinf31}. 

One also notes that since $(E_{p-1})_{\Z/p}$ is both $E_\infty$ and Real-oriented (as proved in
\cite{oddorient}), we do know that there is a $\Z/p$-equivariant $E_\infty$-map
$$C_\bullet \Sigma^{-1-\beta}\widetilde{\Z/p}\wedge B\mathbb{T}\r E_{p-1}.$$

\vspace{5mm}

\section{Appendix: Graphical illustration of the construction of $BP\R$ at $p>2$}

The purpose of this Appendix is to provide a graphical illustration of the construction given in Section \ref{sconst}. The even (resp. odd)
pattern of the tower is depicted in Figure 9 (resp. Figure 10). Let us briefly recall the classical Brown-Peterson $BP$-tower \cite{bp}. Their tower is
described by taking the homology $H\Z/p_*H\Z$ and killing those elements which are not supposed to be in $H\Z/p_*BP$. These are, of course,
all the copies of $H\Z/p_*=\Z/p$ indexed by a generator $\xi_R\tau_E$ in Milnor's notation \cite{milnor} where $E=(e_1,e_2,\dots)\neq 0$, $e_n\in\{0,1\}$. In the first stage of the tower, we attach copies of $H\Z$ along the Milnor primitives $Q_n$, (the corresponding generators being thus labelled
by $v_n$), which sends $\tau_n\mapsto 1$, thereby killing
all these elements, creating, however, ``error terms" due to the relations $\tau_n^2=0$. These are corrected in subsequent stages of
the tower, thus making the generators $v_n$ polynomial.

This construction has an
exact analogue in constructing $BP\R$ for $p=2$ \cite{hk}; while the relation for $\tau_n^2$ is more complicated, it does not affect the
multiplication rule of the $v_n$'s (this also occurs in the classical $BP$ tower for $p=2$).

In the case of the $BP\R$ tower for $p>2$ odd, we proceed similarly, with several complications. One is that unlike the non-equivariant case, 
where all the constituent pieces of the tower are suspensions of $H\Z$, or the $BP\R$ at $p=2$, where the pieces are $RO(\Z/2)$-graded suspensions
of $H\underline{\Z}$, in constructing $BP\R$ at $p>2$, we encounter $RO(\Z/p)$-graded suspensions of several different kinds of pieces,
namely $H\underline{\Z}$, $H\widetilde{\mathcal{L}}_{p-1}$,$H\underline{\Z/p}$, and $H\underline{\mathcal{L}}_p$. Thus, the general plan of our pictures is
to depict the homology of the generators involved, using the symbols introduced in Figures 2,4,6, 
while showing the k-invariants killing appropriate parts of their $RO(\Z/p)$-graded
coefficients (depicted by the curved arrows). We remind the reader that while generators of the form
$H\underline{\mathcal{L}}_p$ are necessary to complete the tower, such a generator represents a free spectrum, thus spawning 
a non-equivariant $BP$ tower, which is considered negligible and is omitted from the picture. This corresponds to higher powers of the $v_n$
generators being negligible, thus making the picture necessarily incomplete. Finally, our pictures depict the good wedges of the genuine 
Eilenberg-Mac Lane generators. We see that the primary k-invariants wipe out the ``odd part'' of the good wedges of generators
of the equivariant dual Steenrod algebra,
as well as the new homology of the generators arising in the tower, completely. The reader should recall, however, that due to the presence of
the derived wedges, this ``slice version'' of the tower at present remains conjectural, and we only have a Borel cohomology version. The reason
we keep the genuine equivariant homology picture is to demonstrate in what exact degrees the generator classes reside.

\vspace{5mm}

\subsection{Detailed description of the even pattern}

In Figure 9, the illustration is for the prime $p=3$. The generator in the bottom left corner is the bottom element $H\underline{\Z}$. The
triangle it is enclosed in depicts the fact that this generator survives. The leftmost column of Figure 9 depicts what the first k-invariant does
on the factor denoted by $T_\theta(t_i)$ in Theorem A of \cite{sankarw} (corresponding to the generator monomials in (4) of \cite{st1}).
Going from the bottom, the second icon shows the element $\widehat{\xi}_n$ which survives (and hence is encircled in a triangle). The rest of
this copy of
$H\Z\wedge T$ (in the notation of \cite{st1}) supports the k-invariant $Q_n^\prime$ going to the $H\widetilde{\mathcal{L}}_{p-1}$ corresponding
to the generator $v_n$. (We can imagine this part of the tower occurring multiplied by a polynomial in $\Phi(v_m)$ for any $m\in\N$
and an even monomial in $v_m$, $m>n$, occurring in the first power.) Now for the higher terms of the first column, a different pattern
occurs, since they are not supposed to contain any surviving coefficient elements. Accordingly, they are taken isomorphically
by the k-invariant $Q_n^\prime$  to the corresponding copy of $HX$ (the second icon, from the bottom, in the second column from the left 
of Figure 9). We observe that this leaves a copy of $H\underline{\mathcal{L}}_p$ in the target unattended, which supports a $\overline{Q}_n$
into a negligible part of the tower (not depicted in Figure 9). 

Since we are at $p=3$, the fourth icon (from the bottom) in the first column of Figure 9 is already the top term (see (4) of \cite{st1}),
supporting an $H\underline{\Z/p}$ according to Theorem A of \cite{sankarw}. This is enclosed in a dashed triangle, depicting two new phenomena, 
since only one generator in homology, denoted in \cite{st1} by $\underline{\theta}_n$, survives. This element, however, is integral, and accordingly,
is taken by $Q_n^\prime$ by a Bockstein to its target, indicating that no additional negligible term is spawned in this dimension (since the 
negligible higher powers of $v_n$ are already visible in the undepicted negligible part of our tower, starting with $v_n^2$). Also, we note that
the remainder of third icon (from the bottom) of the second column from the left of Figure 9 is also matched by $Q_n^\prime$ with half of the
coefficients of the top $H\underline{\Z/p}$ term of the first column, thus leaving an integral surviving pattern.

Finally, Figure 9 depicts a $Q_n^{\prime\prime}$ matching the top term of the second column counted from the left with the last column 
(which has only one icon). This is the transpotence k-invariant, supporting the $\Phi(v_n)$ generator. Note that, again, a copy of 
$H\underline{\mathcal{L}}_p$ is unattended, signifying another negligible generator in the tower.

This part of the tower is repeated by multiplication by powers of $\underline{\theta}_n$.

\begin{figure}
\includegraphics{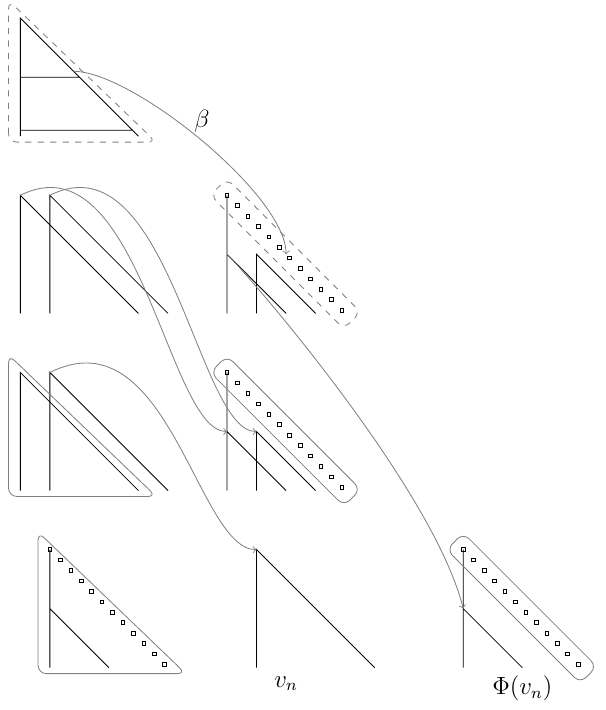}
\caption{The even pattern of the $BP\R$-tower}
\end{figure}

\begin{figure}
\includegraphics{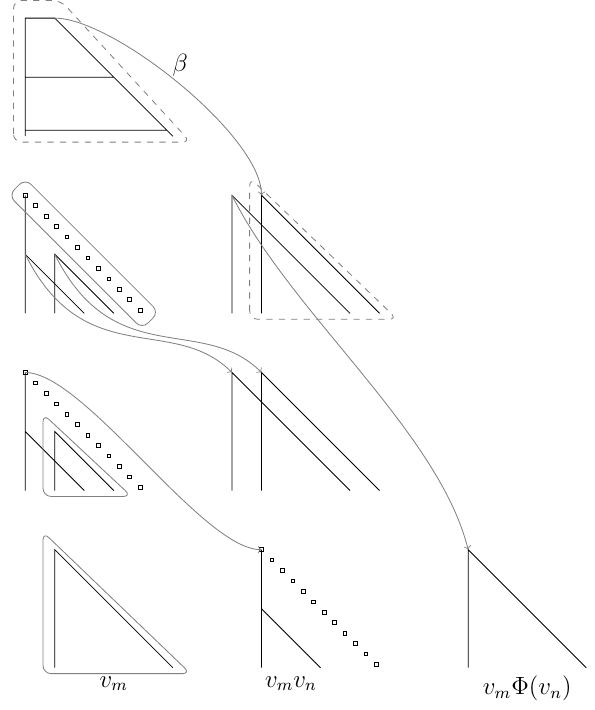}
\caption{The odd pattern of the $BP\R$-tower}
\end{figure}

\vspace{5mm}

\subsection{Detailed description of the odd pattern}

We now turn to Figure 10. We can think of this as continuation of Figure 9 with $n$ replaced by a larger number $m$, where the generator $v_m$, i.e. the surviving bottom
icon of the second column of Figure 9 representing an $H\widetilde{\mathcal{L}}_{p-1}$, is moved to the bottom left corner of Figure 10. 
The next two icons (from the bottom) of the first column of Figure 10 represent copies of $HX$ (see Section \ref{sconst}).
The differential $Q_n^\prime$ matches the coefficients of this term with the $H\underline{\Z}$ in the bottom of the second 
column of Figure 10, leaving a copy of $H\widetilde{\mathcal{L}}_{p-1}$ in the second icon of the first column, which survives in the 
homology of $BP\R$. The third icon from the bottom in the first column of Figure 10 is again an $HX$, this time matched by $Q_n^\prime$
with a target $H\underline{Z}\wedge T$ represented by the second icon from the bottom of the second column from the left of
Figure 10. This time, we see that there is a seemingly surviving copy of $H\underline{\mathcal{L}}_p$ in the source, which, however, 
supports another negligible part of the $BP\R$ tower, via the k-invariant $\overline{\overline{Q}}$.

At the top of the first column (counted from the left) of Figure 10, we have a copy of $H\widetilde{\mathcal{L}}_{p-1}/p$. 
This partially survives into a copy of $H\widetilde{\mathcal{L}}_{p-1}$ on the $\underline{\theta}_n$ generator. To achieve this effect,
there is a Bockstein into one $H\widetilde{\mathcal{L}}_{p-1}$ part of the target $H\underline{\Z}\wedge T$ in the third icon from the bottom
of the second column of Figure 10, realized by the k-invariant $Q_n^\prime$. 

The remaining copy of $H\widetilde{\mathcal{L}}_{p-1}$ is matched, again, by the ``transpotence'' differential $Q_{n}^{\prime\prime}$ with
the icon in the third column of Figure 10, representing $v_m\Phi(v_n)$.

\vspace{10mm}

\end{document}